\date{\today}
\newcommand{\bx}{\mathbf{x}} 
\newcommand{\bp}{\mathbf{p}}
\newcommand{\N}{\mathbb{N}}
\newcommand{\C}{\mathbb{C}}
\newcommand{\R}{\mathbb{R}}
\newcommand{\supp}{{\operatorname{supp}}}
\newcommand{\dist}{{\operatorname{dist}}}
\theoremstyle{plain}
\newtheorem{theorem}{Theorem}[section]{\bf}{\it}
\newtheorem{prop}[theorem]{Proposition}{\bf}{\it}
\newtheorem{lemma}[theorem]{Lemma}{\bf}{\it}
{\bf}{\it}
{\bf}{\it}
\theoremstyle{definition}
\newtheorem{remark}[theorem]{Remark}{\it}{\rm}
{\bf}{\rm}
{\rm}{\rm}
\newcommand{\1}{{\rm 1\hspace*{-0.4ex}%
\rule{0.1ex}{1.52ex}\hspace*{0.2ex}}}
\newenvironment{pf*}[1]{\par\medskip\noindent\textit{#1}\,:}{\hspace*{\fill}\qed\medskip\par\noindent}   
\title[Analyticity of solutions to nonlocal
equations]
{Real analyticity of solutions\\ to Schr{\"o}dinger
  equations\\  
  involving fractional Laplacians}
\author[A. Dall'Acqua, S. Fournais, T. {\O}. S{\o}rensen, and
E. Stockmeyer]{Anna Dall'Acqua, S{\o}ren Fournais,\\ Thomas \O stergaard
  S\o rensen, and Edgardo Stockmeyer} 
\thanks{\copyright\ 2013 by the
       authors. This article may be reproduced, in its entirety, for
       non-commercial purposes.}
\address[Anna Dall'Acqua]
{Max Planck Institute for Mathematics in the Sciences,
Inselstra{\ss}e 22,
D-04103 Leipzig, Germany.}
\address[]{Present address:
Institute of Analysis, University of Ulm, Helmholtz\-strasse 18, D-89091
Ulm, Germany. 
}
\email{anna.dallacqua@uni-ulm.de}
\address[S. Fournais]
{Department of Mathematics,
Aarhus University,
Ny Munke\-gade 118,
DK-8000 Aarhus C, Denmark.}
\email{fournais@imf.au.dk}
\address[Thomas {\O}stergaard S{\o}rensen]
{Department Mathematisches Institut, LMU M{\"u}nchen
Theresienstra\ss e 39, D-80333 Munich, Germany.}
\email{sorensen@math.lmu.de}
\address[Edgardo Stockmeyer]
{Department Mathematisches Institut, LMU M{\"u}nchen\\
Theresienstra\ss e 39, D-80333 Munich, Germany.}
\address[]{Present address:
Instituto de F\'isica, Pontificia Universidad Cat\'olica de Chile,
Vicu\~na Mackenna 4860, Santiago 7820436, Chile.  
}
\email{stock@fis.puc.cl}
\begin{document}

\begin{abstract}
  We prove analyticity of solutions in \(\mathbb{R}^{n}\), \(n\ge1\),
  to certain nonlocal linear Schr{\"o}dinger equations with
  analytic potentials. 
\end{abstract}

\keywords{Elliptic regularity; analyticity; fractional Laplacians;
  pseudorelativistic Schr{\"o}dinger equations.}

\maketitle

\section{Introduction, result, and proof.}\label{sec:first}

In \cite{ours} we proved the real analyticity away from the Coulomb
singularity of atomic pseudorelativistic Hartree-Fock orbitals.  The
proof works for solutions to a variety of equations (see
\cite[Remark~1.2]{ours}), in particular, any \(H^{1/2}\)-solution
\(\varphi:\mathbb{R}^3\to\mathbb{C}\) to the non-linear equation
\begin{align}\label{eq:non-lin1}
  (\sqrt{-\Delta+1})\varphi -
  \frac{Z}{|\cdot|}\varphi\pm\big(|\varphi|^2*|\cdot|^{-1}\big)\varphi
  =\lambda\varphi
\end{align}
is real analytic away from \(\mathbf{x}=0\). The emphasis in
\cite{ours} was on the Coulomb singularity \(|\cdot|^{-1}\), on the
Hartree-term \((|\varphi|^2*|\cdot|^{-1})\varphi\), and on the
dimension 
\(n=3\). However, the 
result holds for much more general potentials \(V\) than
\(|\cdot|^{-1}\), and in any dimension \(n\ge1\). We state and prove
this in the linear case here  
(refering to \cite{ours} for certain technical points of the proof).

\begin{theorem}\label{thm:one}
  Let \(\Omega\subset\mathbb{R}^n\), \(n\ge1\), be an open set, and assume
  \(V:\mathbb{R}^{n}\to\mathbb{C}\) is real analytic in \(\Omega\),
  that is, \(V\in C^{\omega}(\Omega)\).  Let \(s\in[1/2,1)\), \(m>0\),
  or \(s=1/2, m=0\), and assume \(\varphi\in
  {H}^{2s}(\mathbb{R}^{n})\) is a solution to
  \begin{align}\label{eq:main}
    E_{s,m}({\bf p})\varphi:=(-\Delta+m^2)^{s}\varphi = V\varphi \ \
    \text{ in } \ \ L^{2}(\mathbb{R}^{n})\,.
  \end{align}
  Assume furthermore that \( V\in L^{t}(\mathbb{R}^{n}) +
  L^{\infty}(\mathbb{R}^{n})\) with
  \begin{align}\label{cond-L-t-on-V}
    \begin{cases}
      t=n/4s & \text{if $s\in[1/2,n/4)\cap[1/2,1)$ and $n\ge3$\,,}
      \\
      t>1 & \text{if $s=n/4$ and $n=2$ or $n=3$\,,}
      \\
      t=1 & \text{if $s\in(n/4,1)$ and $n=1,2,3$\,.}
    \end{cases}
  \end{align}

  Then \(\varphi\in C^{\omega}(\Omega)\), that is, \(\varphi\) is real
  analytic in \(\Omega\).
\end{theorem}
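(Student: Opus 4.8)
The natural strategy is the classical Morrey–Nirenberg approach to analytic elliptic regularity, adapted to the nonlocal operator $(-\Delta+m^2)^s$, and is exactly the scheme carried out in \cite{ours}. First I would establish smoothness: a bootstrap argument using the mapping properties of $(-\Delta+m^2)^{-s}$ on $L^p$ spaces (together with the hypothesis \eqref{cond-L-t-on-V} on $V$ and Sobolev embedding) upgrades $\varphi\in H^{2s}(\mathbb R^n)$ successively to $\varphi\in C^\infty(\Omega)$; the role of the three cases in \eqref{cond-L-t-on-V} is precisely to guarantee that the first step of this iteration gains integrability. Since the statement is local, fix a ball $B=B_{2\rho}(\bx_0)\Subset\Omega$ and a cutoff $\chi\in C_c^\infty(B)$ with $\chi\equiv1$ on $B_\rho(\bx_0)$. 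Writing $(-\Delta+m^2)^s(\chi\varphi) = \chi V\varphi + [(-\Delta+m^2)^s,\chi]\varphi =: F$, the commutator $[(-\Delta+m^2)^s,\chi]$ is a smoothing pseudodifferential operator of order $2s-1<1$, so $F$ is at least as regular as $\varphi$ on $B$, and $\chi\varphi = (-\Delta+m^2)^{-s}F$ is given by convolution with the (explicit, exponentially decaying for $m>0$, or homogeneous for $m=0$) kernel of $E_{s,m}(\bp)^{-1}$.

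The core of the argument is the derivation of the analytic a priori estimates. I would prove, by induction on $|\alpha|$, bounds of the Cauchy–Kovalevskaya type
\begin{align*}
  \|\partial^\alpha\varphi\|_{L^2(B_{r})} \le C\, A^{|\alpha|}\, |\alpha|!\,(R-r)^{-|\alpha|}
\end{align*}
valid for all concentric balls $B_r\subset B_R\subset B$ and all multi-indices $\alpha$, with $C,A$ independent of $\alpha$. Differentiating the equation for $\chi\varphi$, one controls $\partial^\alpha(\chi\varphi)$ via $(-\Delta+m^2)^{-s}$ acting on $\partial^\alpha F$; the term $\partial^\alpha(\chi V\varphi)$ is expanded by the Leibniz rule, and the analyticity of $V$ supplies the factorial bounds on $\partial^\beta V$, while the inductive hypothesis supplies them for the lower-order derivatives of $\varphi$. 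The commutator term and the nonlocal tails (the contribution of $\varphi$ from outside $B$, where $\varphi\in H^{2s}$ only) must be shown to contribute lower-order, summable corrections; this uses the decay of the kernel of $E_{s,m}(\bp)^{-1}$ together with $\varphi\in H^{2s}(\mathbb R^n)\hookrightarrow L^2$ globally. Summing the resulting Taylor series then gives a positive radius of convergence at each point of $\Omega$.

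The main obstacle—and the point where one must invoke \cite{ours}—is handling the \emph{nonlocality} of $(-\Delta+m^2)^s$ in these iterated estimates: unlike for a differential operator, the inverse is not local, so when one localizes and differentiates one repeatedly picks up (i) commutator terms of fractional order and (ii) genuinely global ``tail'' contributions, and one must track how the constants in their bounds grow with $|\alpha|$ to be sure they do not overwhelm the factorial. The technical heart is therefore a careful quantitative analysis of the kernel of $E_{s,m}(\bp)^{-1}$ and of the commutators $[\partial^\alpha,\chi](-\Delta+m^2)^{-s}$, showing the relevant operator norms grow no faster than $C^{|\alpha|}|\alpha|!$; this is established in \cite{ours} for $n=3$, $s=1/2$, and the argument there goes through verbatim for general $n\ge1$ and $s\in[1/2,1)$ (with the $m=0$, $s=1/2$ case using the homogeneous kernel $c_n|\bx|^{-(n-1)}$), once the smoothness bootstrap of the first paragraph—which is where the dimension-dependent condition \eqref{cond-L-t-on-V} enters—has been carried out. $\hfill\qed$
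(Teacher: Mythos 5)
Your overall strategy---Morrey--Nirenberg induction on the order of differentiation, localization, and kernel bounds for \(E_{s,m}(\mathbf{p})^{-1}\)---is the same as the paper's, but your proposal omits precisely the step that constitutes the proof, and the two devices you indicate for discharging it do not hold up as stated. First, you localize with a single cutoff and rely on \([(-\Delta+m^2)^s,\chi]\) being a pseudodifferential operator of order \(2s-1\). That is fine for one application, but the inductive analytic estimate needs operator bounds whose constants grow no faster than \(C^{|\alpha|}|\alpha|!\) uniformly in \(|\alpha|\); the commutator is itself nonlocal and of fractional order, and you give no mechanism for differentiating it \(|\alpha|\) times, or composing it with \(D^{\alpha}E_{s,m}(\mathbf{p})^{-1}\), with factorial control. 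The paper avoids this entirely: after inverting the equation, \(\varphi=E_{s,m}(\mathbf{p})^{-1}V\varphi\), the derivative \(D^{\beta}\) is distributed by the nested-cutoff identity of Lemma~\ref{Edgardo} over a \(j\)-dependent family \(\{\chi_k,\eta_k\}\), so that the only nonlocal objects appearing are \(\Phi E_{s,m}(\mathbf{p})^{-1}D_{\nu}D^{\beta_k}\chi_k\) with supports separated by a distance comparable to \(\epsilon k\), controlled by the smoothing estimate \eqref{eq:smoothing-est-Lp}, while the commutators \([\eta_k,D^{\mu_k}]=-D^{\mu_k}\eta_k\) are mere multiplication operators bounded by \(C_*/\epsilon\). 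Second, your claim that the estimates of \cite{ours} ``go through verbatim for general \(n\ge1\) and \(s\in[1/2,1)\)'' is not accurate: \cite{ours} uses the explicit Green's function of \(-\Delta+m^2\) in dimension three, and extending Lemma~\ref{normsmooth-Lp} to all \(n\ge1\), \(s\in(0,1)\), \(m>0\), and to \(s=1/2\), \(m=0\), is exactly the new content of the Appendix, which substitutes the subordination formula \eqref{integral-formula-power-s} and the heat-kernel representation \eqref{green-n-big}, with separate treatments of \(n=1\) and of the massless case; moreover, in the massless case the paper does not invert \((-\Delta)^{1/2}\) via the homogeneous kernel \(c_n|\mathbf{x}|^{-(n-1)}\) (which is not even bounded on \(L^2\)) but rewrites the equation as \(\varphi=((-\Delta)^{1/2}+1)^{-1}(V+1)\varphi\). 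Asserting the key lemma without proof, on the strength of a reference that does not contain it in the needed generality, is a genuine gap.

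A further, smaller, inaccuracy concerns where the hypothesis \eqref{cond-L-t-on-V} is spent. You assign it to an initial \(C^{\infty}\) bootstrap, but the paper performs no such bootstrap---smoothness and analyticity both follow at once from the derivative bounds \eqref{eq:lemma2} of Proposition~\ref{prop:first} together with Sobolev embedding---and the integrability condition is consumed only in the estimate of the nonlocal tail term \eqref{the-term}, where one needs \(\|V_{2}\varphi\|_{\mathfrak{p}_{2}}<\infty\) for \(\mathfrak{p}_{2}=\max\{2n/(n+4s),1\}\) via H\"older's inequality and the Sobolev embedding of \(H^{2s}(\mathbb{R}^{n})\) (see \eqref{est-outside} and after). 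Since \(V\) is analytic, hence locally bounded, on \(\Omega\), it creates no local obstruction to regularity; the issue is the global contribution of \(V\varphi\) seen through the kernel of \(E_{s,m}(\mathbf{p})^{-1}\), and a proof along your lines would still have to spend an integrability hypothesis (or justify that a global \(L^{\mathfrak{p}}\) bound on \(V\varphi\) suffices) exactly at that tail term, not in the bootstrap.
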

\begin{remark}\label{rem:first}
  In the case \(s=1\), the result is well-known (for all \(m\ge0\)), and no integrability
  condition on \(V\) is needed, the equation being local in this
  case. The integrability conditions on \(V\) seem unnecessary, but
  are needed for our method to work (see \eqref{the-term} and
  after). Note that if \(V\in L^{p}(\mathbb{R}^{n})\) for some
  \(p\in[1,\infty)\), then \(V\in
  L^{q}(\mathbb{R}^{n})+L^{\infty}(\mathbb{R}^{n})\) for all
  \(q\in[1,p]\).  
As in \cite{ours} our proof is based on the classical proof by Morrey
and Nirenberg (see \cite{Hormander}). In order to deal with the
non-locality we use the localization result in Lemma~\ref{Edgardo} and
the analytic smoothing estimate in Lemma~\ref{normsmooth-Lp}, both in the
Appendix below (for more details see \cite[after Remark~1.4]{ours}).
\end{remark}
To prove Theorem~\ref{thm:one}, it suffices (using Sobolev embedding)
to prove the following proposition (for details in the case \(n=3\), see
\cite[after~Proposition~2.1]{ours}; this can be modified for general
\(n\ge 1\)). Note that in the linear case, it
suffices to work in \(L^2(\mathbb{R}^{n})\).
\begin{prop}\label{prop:first}
  Let the assumptions be as in Theorem~\ref{thm:one}.  Let
  \(\mathbf{x}_{0}\in\Omega\),
  \(R=\min\{1,{\operatorname{dist}}(\mathbf{x}_{0},\Omega^{c})/4\}\),
  and \(\omega=B_{R}(\mathbf{x}_{0})\,(\subset\subset\Omega)\). Define
  \(\omega_{\delta}=B_{R-\delta}(\mathbf{x}_{0})\) for \(\delta>0\).

  Then there exist constants $C,B>1$ such that for all
  $j\in\mathbb{N}_{0}$, and for all $\epsilon >0$ such that \(\epsilon
  j\le R/2\), we have
  \begin{equation}\label{eq:lemma2}
    \epsilon^{|\beta|} \|D^{\beta} \varphi\|_{L^{2}(\omega_{\epsilon j})}
    \leq
    C B^{|\beta|} \qquad \text{for all $\beta \in \mathbb{N}_{0}^n$
      with $|\beta| \leq j\,.$} 
  \end{equation}
\end{prop}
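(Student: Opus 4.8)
The plan is to follow the Morrey–Nirenberg scheme for analytic elliptic regularity, adapted to the nonlocal operator $(-\Delta+m^2)^s$ as in \cite{ours}. The statement \eqref{eq:lemma2} is an a priori bound on weighted $L^2$-norms of all derivatives of $\varphi$ on the shrinking balls $\omega_{\epsilon j}$; one proves it by induction on $j$. For $j\le j_0$ (some fixed order depending only on the local Sobolev regularity that the equation already provides), the bound is just the statement that $\varphi\in H^{2s}_{\mathrm{loc}}$ together with elliptic bootstrapping gives $\varphi\in C^\infty(\Omega)$, so each $\|D^\beta\varphi\|_{L^2(\omega_{\epsilon j})}$ is finite and can be absorbed by choosing $C$ large. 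The substance is the inductive step.

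First I would fix a multi-index $\beta$ with $|\beta|=j$ (the top order; lower orders on the larger ball $\omega_{\epsilon(j-1)}$ being already controlled), and apply $D^\beta$ to the equation \eqref{eq:main}. Because the equation is nonlocal, one cannot simply commute $D^\beta$ through $(-\Delta+m^2)^s$ and localize; instead I would use the localization result Lemma~\ref{Edgardo} to write, on the ball $\omega_{\epsilon j}$, $(-\Delta+m^2)^s(\chi D^\beta\varphi) = \chi D^\beta(V\varphi) + (\text{commutator / remainder terms})$, where $\chi$ is a cutoff supported slightly outside $\omega_{\epsilon j}$ and equal to $1$ on it. The remainder is the ``nonlocal tail'': it involves $D^\beta\varphi$ integrated against a smooth kernel away from the support of $\chi$, hence is controlled by lower-regularity global norms of $\varphi$ (ultimately by $\|\varphi\|_{H^{2s}}$) times combinatorial factors, and this is where the analytic smoothing estimate Lemma~\ref{normsmooth-Lp} enters to keep the constants growing only geometrically in $|\beta|$. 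Next I would invert $(-\Delta+m^2)^s$ (which is, up to lower order, like $E_{s,m}(\mathbf p)^{-1}$, bounded $L^2\to H^{2s}$) and estimate the right-hand side: the main term $\chi D^\beta(V\varphi)$ is expanded by the Leibniz rule as $\sum_{\gamma\le\beta}\binom{\beta}{\gamma}(D^\gamma V)(D^{\beta-\gamma}\varphi)$. Here the analyticity of $V$ gives $\|D^\gamma V\|_{L^\infty(\omega')}\le C_V B_V^{|\gamma|}|\gamma|!$, and one splits the sum into $|\gamma|$ small (where $D^{\beta-\gamma}\varphi$ is high order, handled by the induction hypothesis on the slightly larger ball, paying a factor from $\epsilon$) and $|\gamma|$ large (where $D^{\beta-\gamma}\varphi$ is low order, handled by the already-established smooth bounds or a secondary induction). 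The integrability splitting $V\in L^t+L^\infty$ is used precisely to make sense of $\|(D^\gamma V)(D^{\beta-\gamma}\varphi)\|_{L^2}$ via Hölder and Sobolev embedding when $V$ is only in $L^t$ — this is the role of condition \eqref{cond-L-t-on-V} and the reason for the caveat in Remark~\ref{rem:first}.

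The delicate bookkeeping is the standard Morrey–Nirenberg one: one must choose the relation between $\epsilon$, $\delta$, the radii, and the constant $B$ so that at each step the loss from differentiating the cutoff (a factor like $C/\delta$ per derivative, with $\delta\sim\epsilon$) and from the factorials $|\gamma|!\binom{\beta}{\gamma}\le |\beta|!$ is compensated by the gain $B^{-|\gamma|}$ and the geometric room in the induction, so that the combinatorial sums $\sum_{\gamma}$ converge to a constant independent of $j$. Concretely I would prove the stronger bound $\sum_{|\beta|\le j}\frac{\epsilon^{|\beta|}}{|\beta|!}\|D^\beta\varphi\|_{L^2(\omega_{\epsilon j})}\le C$ (an exponential-generating-function form), which makes the Leibniz convolution sums transparent, and then note \eqref{eq:lemma2} is weaker. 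The main obstacle — and the point where one genuinely needs \cite{ours} rather than the classical local argument — is handling the nonlocal remainder from Lemma~\ref{Edgardo} with constants that are uniform in $j$: one must show that the smoothing kernel of $E_{s,m}(\mathbf p)^{-1}$ away from the diagonal, when hit by $D^\beta$, produces bounds $\le C B^{|\beta|}|\beta|!$ with $B$ independent of the ball, which is exactly the content of Lemma~\ref{normsmooth-Lp}; feeding this into the induction without degrading the geometric growth rate is the crux. Everything else is Hölder's inequality, Sobolev embedding (with the exponents dictated by \eqref{cond-L-t-on-V}), and careful but routine choice of constants. Finally, once \eqref{eq:lemma2} holds, summing a Taylor series with remainder and using $|\beta|!\le n^{|\beta|}\beta!$ upgrades the derivative bounds to genuine real analyticity of $\varphi$ in $\omega$, and since $\mathbf x_0\in\Omega$ was arbitrary, in all of $\Omega$, which proves Proposition~\ref{prop:first} and hence Theorem~\ref{thm:one}.
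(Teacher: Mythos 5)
There is a genuine gap, and it sits at the heart of your reduction. You propose to prove the ``stronger'' bound \(\sum_{|\beta|\le j}\tfrac{\epsilon^{|\beta|}}{|\beta|!}\|D^{\beta}\varphi\|_{L^{2}(\omega_{\epsilon j})}\le C\) and then deduce \eqref{eq:lemma2} as a weaker consequence. The implication goes the other way: your sum bound only yields \(\epsilon^{|\beta|}\|D^{\beta}\varphi\|_{L^{2}(\omega_{\epsilon j})}\le C\,|\beta|!\), and a factorial is not dominated by \(CB^{|\beta|}\). In the Morrey--Nirenberg scheme the factorial growth of \(\|D^{\beta}\varphi\|\) is supposed to be generated \emph{only} by the coupling \(\epsilon\le R/(2j)\); if you insert an extra \(1/|\beta|!\) into the quantity you control, then at the end (take \(\epsilon=R/(2j)\), \(|\beta|=j\)) you get \(\|D^{\beta}\varphi\|_{L^{2}(\omega_{R/2})}\lesssim (j!)^{2}(c/R)^{j}\), i.e.\ a Gevrey--2 bound, which does not give analyticity and does not prove the Proposition. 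The weight in the induction must stay purely exponential, \(\epsilon^{|\beta|}B^{-|\beta|}\), exactly as in \eqref{eq:lemma2}; this is also what makes the Leibniz sums close, via estimates of the type \eqref{eq:ihy-new} where the factor \((|\sigma|/\ell)^{|\sigma|}\) compensates the \(|\mu|!\) coming from the derivatives of \(V\).

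A second, structural problem: you localize \emph{inside} the fractional operator, writing \((-\Delta+m^{2})^{s}(\chi D^{\beta}\varphi)=\chi D^{\beta}(V\varphi)+\text{commutators}\), and attribute this to Lemma~\ref{Edgardo}. That lemma is a purely differential identity distributing \(D^{\sigma}\) over the cutoffs \(\chi_{k},\eta_{k}\); it says nothing about the commutator \([(-\Delta+m^{2})^{s},\chi]\), which is a genuinely nonlocal operator, and neither it nor Lemma~\ref{normsmooth-Lp} provides the analytic-type bounds (uniform in \(j\), with only geometric loss) that such commutators would require. The paper's route avoids this entirely: one first inverts the equation, \(\varphi=E_{s,m}(\mathbf{p})^{-1}V\varphi\), applies \(\Phi D^{\beta}\), and then uses Lemma~\ref{Edgardo} on \(D^{\sigma}\) acting on \(V\varphi\) \emph{under} the resolvent, so that every nonlocal effect appears as an operator \(\Phi E_{s,m}(\mathbf{p})^{-1}D^{\beta_k}\chi_{k}\) or \(\Phi D^{\beta}E_{s,m}(\mathbf{p})^{-1}\eta_{j}\) with separated supports, which is exactly what Lemma~\ref{normsmooth-Lp} bounds (and where the \(L^{t}+L^{\infty}\) hypothesis and Sobolev embedding enter, to make \(\|V_{i}\varphi\|_{\mathfrak{p}_{i}}\) finite). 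If you keep your decomposition, you must supply new commutator estimates with factorial-free constants; if you adopt the inverted-equation decomposition and drop the generating-function reduction, your remaining outline (Leibniz plus analyticity of \(V\), induction hypothesis on larger balls, smoothing estimate for the far term) matches the paper's proof.
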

\begin{proof}
  This is by induction. For \(j\in\mathbb{N}_{0}\) (and constants
  \(C,B>1\) to be determined below), let \(\mathcal{P}(j)\) be the
  statement: For all $\epsilon >0$ with \(\epsilon j\le R/2\) we have
  \begin{equation}\label{eq:lemma2-ihy}
    \epsilon^{|\beta|} \|D^{\beta} \varphi\|_{L^{2}(\omega_{\epsilon j})}
    \leq
    C\, B^{|\beta|} \qquad \text{for all $\beta \in \mathbb{N}_{0}^n $
      with $ |\beta|\le j\,.$}
  \end{equation}
  Choosing \(C\ge \|\varphi\|_{H^{1}(\mathbb{R}^{n})}\) (which is
  finite since \(\varphi\in H^{2s}(\mathbb{R}^{n})\), \(s\in[1/2,1)\)) and \(B> 1\)
  ensures that both \(\mathcal{P}(0)\) and \(\mathcal{P}(1)\) hold
(since \(\epsilon\le R/2\le 1\) for \(j=1\)).
  The induction hypothesis is: Let \(j\in\mathbb{N}\), \(j\ge
  1\). Then \(\mathcal{P}(\tilde{j})\) holds for all \(\tilde{j}\le
  j\).  We now prove that \(\mathcal{P}(j+1)\) holds. By the
  definition of \(\omega_{\delta}\) and the induction hypothesis, it
  suffices to study \(\beta\in\mathbb{N}_0^n\) with \(|\beta|=j+1\).
  It therefore remains to prove that
  \begin{align}\label{est:to-prove}\nonumber
    \epsilon^{|\beta|} \|D^{\beta}
    \varphi\|_{L^{2}(\omega_{\epsilon(j+1)})} \leq C\, B^{|\beta|}
    \qquad &\text{for all $\epsilon>0 $ with $\epsilon(j+1)\le R/2$}
    \\
    &\text{and all $ \beta \in \mathbb{N}_{0}^n $ with $
      |\beta|=j+1\,.$}
  \end{align}
  Let \(\epsilon\) and \(\beta\) be as in \eqref{est:to-prove}.  It is
  convenient to write, for \(\ell>0\), \(\epsilon>0\) such that
  \(\epsilon \ell\le R/2\), and \(\sigma\in\mathbb{N}_0^n\) with
  \(0<|\sigma|\le j\),
  \begin{equation*}
    \|D^{\sigma}\varphi\|_{L^{2}(\omega_{\epsilon \ell})}
    = 
    \|D^{\sigma}\varphi\|_{L^{2}(\omega_{\tilde{\epsilon}\tilde{j}})}
    \quad\text{with}\quad
    \tilde{\epsilon}=\frac{\epsilon \ell}{|\sigma|},\ \tilde{j}=|\sigma| 
    \,, 
  \end{equation*}
  so that, by the induction hypothesis (applied on the term with
  $\tilde{\epsilon}$ and \(\tilde{j}\)) we get that
  \begin{align}\label{eq:ihy-new}
    \|D^{\sigma}\varphi\|_{L^{2}(\omega_{\epsilon\ell})} \leq C
    \Big(\frac{B}{\tilde{\epsilon}}\Big)^{|\sigma|} = C \Big(
    \frac{|\sigma|}{\ell} \Big)^{|\sigma|}
    \Big(\frac{B}{\epsilon}\Big)^{|\sigma|}\,.
  \end{align}
  Compare this with \eqref{eq:lemma2-ihy}.  With the convention that
  \(0^0=1\), \eqref{eq:ihy-new} also holds for \(|\sigma|=0\).

  Inverting the equation \eqref{eq:main} when \(m>0\), we have (in
  \(L^{2}(\mathbb{R}^{n})\))
  \begin{align}\label{eq:inverted}
    \varphi = E_{s,m}({\bf p})^{-1}V\varphi\,.
  \end{align}
  For the case \(s=1/2, m=0\),
  \begin{align}\label{eq:inverted-bis}
    \varphi = \big((-\Delta)^{1/2}+1\big)^{-1}\widetilde{V}\varphi
    =:\widetilde{E}_{1/2,0}({\bf p})^{-1}\widetilde{V}\varphi\,,
  \end{align}
  with \(\widetilde{V}=V+1\in
  L^{t}(\mathbb{R}^n)+L^{\infty}(\mathbb{R}^{n})\). Note that \(1\in
  C^{\omega}(\mathbb{R}^{n})\).

  We choose a function $\Phi$ (depending on \(j\)) satisfying
  \begin{equation}\label{def:Phi}
    \Phi \in C^{\infty}_{0}(\omega_{\epsilon (j+3/4)})\,,\quad
    0\le\Phi\le1\,,\quad
    \mbox{
      with }\; \Phi \equiv 1 \; \mbox{ on }\; \omega_{\epsilon(j+1)}\,. 
  \end{equation}
  Then \(\|D^{\beta}\varphi\|_{L^{2}(\omega_{\epsilon(j+1)})} \le
  \|\Phi D^{\beta}\varphi\|_{L^2(\mathbb{R}^{n})}=:\|\Phi
  D^{\beta}\varphi\|_{2}\).  The estimate \eqref{est:to-prove}--and
  hence, by induction, the proof of Proposition~\ref{prop:first}---now
  follows from \eqref{eq:inverted} and \eqref{eq:inverted-bis} and the
  following lemma.
\end{proof}
\begin{lemma} \label{lem:V} Assume the induction hypothesis described
  above holds. Let \(\Phi\) be as in \eqref{def:Phi}.  Then for all
  $\epsilon >0$ with \(\epsilon(j+1)\le R/2\), and all $\beta \in
  \mathbb{N}_{0}^n$ with $|\beta|=j+1$, \(\Phi D^{\beta}
  E_{s,m}(\mathbf{p})^{-1} V \varphi\) belongs to
  \(L^{2}(\mathbb{R}^{n})\), and
  \begin{align}\label{est-V}
    \|\Phi D^{\beta} E_{s,m}(\mathbf{p})^{-1} V \varphi\|_{2} \leq
    C\Big(\frac{B}{\epsilon}\Big)^{|\beta|}\,,
  \end{align}
  where \(C, B>1\) are the constants in \eqref{eq:lemma2-ihy}.

  The  same holds for
  \(\Phi D^{\beta}\widetilde{E}_{1/2,0}({\bf
    p})^{-1}\widetilde{V}\varphi\).
\end{lemma}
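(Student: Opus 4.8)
The plan is to prove the estimate \eqref{est-V} by peeling off the highest-order derivatives from $\Phi D^\beta$ and pushing the remaining derivatives through the nonlocal operator $E_{s,m}(\mathbf{p})^{-1}$, onto the analytic potential $V$ and onto $\varphi$, where the induction hypothesis in the form \eqref{eq:ihy-new} is available. First I would use the localization result (Lemma~\ref{Edgardo} in the Appendix) to replace the globally nonlocal object $\Phi D^\beta E_{s,m}(\mathbf{p})^{-1} V\varphi$ by something that only sees $V\varphi$ on a slightly larger ball: that is, write $V\varphi = \Psi V\varphi + (1-\Psi)V\varphi$ with $\Psi$ a cutoff equal to $1$ on $\supp\Phi$ and supported in, say, $\omega_{\epsilon(j+1/2)}$, and control the commutator term $\Phi D^\beta E_{s,m}(\mathbf{p})^{-1}(1-\Psi)V\varphi$ using the fact that $\Phi$ and $1-\Psi$ have disjoint supports separated by a distance $\gtrsim \epsilon$; this is exactly the kind of "tail" estimate handled in \cite{ours}, and it produces a contribution bounded by $C(B/\epsilon)^{|\beta|}$ provided $B$ is chosen large, using that $\|\varphi\|_2$ and Sobolev norms of $\varphi$ are finite and the kernel of $E_{s,m}(\mathbf{p})^{-1}$ decays.

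Next, for the main (local) term $\Phi D^\beta E_{s,m}(\mathbf{p})^{-1}\Psi V\varphi$, I would apply the analytic smoothing estimate, Lemma~\ref{normsmooth-Lp}, which is precisely designed to bound $\Phi D^\beta E_{s,m}(\mathbf{p})^{-1} g$ in $L^2$ in terms of a suitable (negative-order) norm of $g$ together with factors that are summable against the combinatorial weights. Concretely, one expands $D^\beta = D^{\beta'}D^{\beta''}$ and commutes $D^{\beta''}$ (the lower-order part, of bounded size, absorbed into $\Phi$-derivatives) so that what must be estimated is a sum over multi-indices of terms of the form (derivatives of $\Phi$) times (smoothing operator) times $D^\gamma(\Psi V\varphi)$, and then by the Leibniz rule $D^\gamma(\Psi V\varphi) = \sum_{\gamma_1+\gamma_2+\gamma_3=\gamma}\binom{\gamma}{\gamma_1\,\gamma_2\,\gamma_3}D^{\gamma_1}\Psi\,D^{\gamma_2}V\,D^{\gamma_3}\varphi$. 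The analyticity of $V$ on $\Omega$ gives $\|D^{\gamma_2}V\|_{L^\infty+L^t\text{ on }\omega} \le C_V A_V^{|\gamma_2|}|\gamma_2|!$, the derivatives of $\Psi$ cost powers of $1/\epsilon$ (there are boundedly many cutoff layers, as in the Morrey--Nirenberg scheme), and for $D^{\gamma_3}\varphi$ with $|\gamma_3|\le j$ we invoke \eqref{eq:ihy-new}. The integrability condition \eqref{cond-L-t-on-V} on $V$ enters exactly here: it is what makes $\|(\text{smoothing op})\,(D^{\gamma_1}\Psi\,D^{\gamma_2}V\,D^{\gamma_3}\varphi)\|_2$ finite and controllable, via the mapping properties of $E_{s,m}(\mathbf{p})^{-1}$ from $L^{t'}$-type spaces into $L^2$ (Sobolev/Hardy--Littlewood--Sobolev), i.e. the term referred to as \eqref{the-term} in Remark~\ref{rem:first}.

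Having reduced to a finite sum of products, the remaining work is the standard Morrey--Nirenberg bookkeeping: collect the factors $|\gamma_2|!\,A_V^{|\gamma_2|}$, $(|\gamma_3|/\ell)^{|\gamma_3|}(B/\epsilon)^{|\gamma_3|}$, and the $\epsilon^{-|\gamma_1|}$ from the cutoff, multiply by the prefactor $\epsilon^{|\beta|}$ and the multinomial coefficients, and check that the sum over all splittings is bounded by $C(B/\epsilon)^{|\beta|}$ for $B$ chosen large enough (depending on $A_V$, on the smoothing constants, and on the geometry $R$, but not on $j$). The one genuinely delicate point — and the step I expect to be the main obstacle — is the nonlocal tail estimate from the first paragraph: unlike the case $s=1$, $E_{s,m}(\mathbf{p})^{-1}$ does not preserve supports, so one must quantify how much the singular factor $\Phi D^\beta$ acting at distance $\gtrsim\epsilon j$ from $\supp((1-\Psi)V\varphi)$ is damped by the kernel decay of $E_{s,m}(\mathbf{p})^{-1}$, and show this damping beats the growth $(B/\epsilon)^{|\beta|}$; this is where Lemma~\ref{Edgardo} does the heavy lifting and where the argument differs from the classical local one, and it is handled as in \cite{ours}. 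For the variant $s=1/2,m=0$, one simply repeats the argument with $\widetilde E_{1/2,0}(\mathbf{p})^{-1}$ and $\widetilde V=V+1$, noting that adding the analytic function $1$ changes none of the estimates since $1$ and all its derivatives are bounded.
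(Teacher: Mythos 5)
Your overall goal (move derivatives partly onto the kernel, partly onto $V\varphi$, and use the induction hypothesis \eqref{eq:ihy-new}) is the right one, but the decomposition you propose does not deliver the bound \eqref{est-V}, and the gap is exactly at the step you flag as delicate. With a \emph{single} cutoff $\Psi$, the tail term $\Phi D^{\beta}E_{s,m}(\mathbf{p})^{-1}(1-\Psi)V\varphi$ has supports separated only by a distance $d\sim\epsilon$, and the only tool available for it, the disjoint-support smoothing estimate \eqref{eq:smoothing-est-Lp}, then produces a factor $\beta!\,(c/\epsilon)^{|\beta|}\sim (j+1)!\,(c/\epsilon)^{j+1}$. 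The factorial is \emph{not} absorbable into $C(B/\epsilon)^{j+1}$ uniformly in $j$, so ``kernel decay beats the growth'' fails at this separation; the factorial is only beaten when the separation is proportional to $\epsilon$ times the number of derivatives placed on the kernel, since then $k!\,(c/(\epsilon k))^{k}\le (c'/\epsilon)^{k}$ (this is the computation in \eqref{est:beta-fak}, which uses $d\ge\epsilon(j+1/4)$, not $d\gtrsim\epsilon$). Nor can you dump the derivatives on $V\varphi$ in the far region: near $\partial\omega$ the induction hypothesis \eqref{eq:ihy-new} controls essentially no derivatives of $\varphi$ (the factor $(|\sigma|/\ell)^{|\sigma|}$ blows up for small $\ell$). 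This is why the paper does not use a near/far splitting but the full telescoping identity of Lemma~\ref{Edgardo} (equation \eqref{f2}): $j+1$ annular cutoffs $\chi_k$ at distance $\sim\epsilon k$ from $\supp\Phi$, with exactly $k$ derivatives sent to the kernel on layer $k$ and the remaining $j-k$ left on $V\varphi$, where \eqref{eq:ihy-new} with $\ell=j-k$ applies on $\omega_{\epsilon(j-k)}\supseteq\supp\chi_k$; only the outermost piece $\eta_jV\varphi$, at distance $\epsilon(j+1/4)$, is treated as a genuine tail, and that is the one place where the $L^t$ condition on $V$ (via $\|V_i\varphi\|_{\mathfrak{p}_i}<\infty$) is needed. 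Lemma~\ref{Edgardo} is thus not a black-box tail estimate but the multi-layer decomposition itself.

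There is a second, related problem in your treatment of the near term $\Phi D^{\beta}E_{s,m}(\mathbf{p})^{-1}\Psi V\varphi$. Lemma~\ref{normsmooth-Lp} is a \emph{disjoint-support} estimate, not a negative-order-norm estimate, so it cannot be applied when $\Psi\equiv1$ on $\supp\Phi$. If instead you commute all of $D^{\beta}$ through $E_{s,m}(\mathbf{p})^{-1}$ onto $\Psi V\varphi$, Leibniz produces the term in which all $j+1$ derivatives fall on $\varphi$, which is precisely the quantity being proved and is not covered by the induction hypothesis --- the argument becomes circular. The paper avoids this by writing $\beta=\sigma+e_{\nu}$ and keeping the single derivative $D_{\nu}$ with the operator, using that $\Phi E_{s,m}(\mathbf{p})^{-1}D_{\nu}\chi_0$ is bounded on $L^2$ because $2s\ge1$ (this is also where the restriction $s\ge1/2$ enters); only $D^{\sigma}$ with $|\sigma|=j$ is distributed by Lemma~\ref{Edgardo}. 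Your sketch of ``peeling off'' derivatives gestures at this but does not specify the mechanism, and without it the near term cannot be closed.
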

\begin{proof}
  Let $\sigma \in \mathbb{N}_{0}^n$ and $\nu\in \{1,\ldots,n\}$ be such
  that $\beta=\sigma +e_{\nu}$, so that
  $D^{\beta}=D_{\nu}D^{\sigma}$. Notice that $|\sigma|=j$.  Choose
  localization functions $\{\chi_{k}\}_{k=0}^{j}$ and
  $\{\eta_{k}\}_{k=0}^{j}$ as described in the Appendix below.  Since
  \(V\varphi\in L^{2}(\mathbb{R}^{n})\) (from \(\varphi\in
  H^{2s}(\mathbb{R}^{n})\) and the equation \eqref{eq:main}), and
  \(E_{s,m}(\mathbf{p})^{-1}\) maps \(H^{r}(\mathbb{R}^{n})\) to
  \(H^{r+2s}(\mathbb{R}^{n})\) for all \(r\in\mathbb{R}\),
  Lemma~\ref{Edgardo} below (with $\ell=j$) implies that
  \begin{align} \label{f2}\notag 
    \Phi  D^{\beta} E_{s,m}(\mathbf{p})^{-1}[V
    \varphi] &=\sum_{k=0}^{j} \Phi
    D_{\nu}E_{s,m}(\mathbf{p})^{-1}D^{\beta_{k}}\chi_{k}
    D^{\sigma-\beta_{k}} [V\varphi] 
    \\\notag &{\ }+ 
   \sum_{k=0}^{j-1} \Phi
    D_{\nu}E_{s,m}(\mathbf{p})^{-1}D^{\beta_{k}}[\eta_{k},D^{\mu_{k}}]
    D^{\sigma-\beta_{k+1}} [V \varphi] 
    \\ &{\ }+ 
    \Phi
    D_{\nu}E_{s,m}(\mathbf{p})^{-1}D^{\sigma}[\eta_{j}V\varphi]\,,
  \end{align}
  as an identity in \(H^{-|\beta|+2s}(\mathbb{R}^{n})\). Similarly for
  \(\widetilde{E}_{1/2,0}({\bf p})^{-1}\).  Here,
  \([\,\cdot\,,\,\cdot\,]\) denotes the commutator. Also,
  \(|\beta_k|=k\), \(|\mu_k|=1\), and \(0\le\eta_k,\chi_k\le1\). (For
  the support properties of \(\eta_k,\chi_k\), see the Appendix.)  We
  will prove that each term on the right side of \eqref{f2} belong to
  \(L^{2}(\mathbb{R}^{n})\), and bound their norms.  The proof of
  \eqref{est-V} will follow by summing these bounds.
  
\ %

\noindent{\bf The first sum in \eqref{f2}.}
  Let $\theta_{k}$ be the characteristic function of the support of
  $\chi_{k}$ (which is contained in \(\omega\)).  We can estimate, for
  \(k\in\{0,\ldots,j\}\),
  \begin{align}\label{f3}\notag
    \|\Phi D_{\nu}E_{s,m}(\mathbf{p})^{-1}&D^{\beta_{k}}\chi_{k}
    D^{\sigma-\beta_{k}} [V \varphi] \|_{2} 
    \\&=\| (\Phi
    E_{s,m}(\mathbf{p})^{-1}D_{\nu}D^{\beta_{k}} \chi_{k}) \theta_k
    D^{\sigma-\beta_{k}} [V \varphi] \|_{2} 
    \notag\\&\leq \| \Phi
    E_{s,m}(\mathbf{p})^{-1}D_{\nu}D^{\beta_{k}} \chi_{k}
    \|_{\mathcal{B}}\, \| \theta_{k} D^{\sigma-\beta_{k}} [ V \varphi]
    \|_{2}\,.
  \end{align}
  Here, \(\|\cdot\|_{\mathcal{B}}\) is the operator norm on the
  bounded operators on \(L^{2}(\mathbb{R}^n)\).

  For \(k=0\), the first factor on the right side of \eqref{f3} can be
  estimated using the Fourier transform, since \(s\in[1/2,1)\).  This
  way, since \(\|\chi_{0}\|_{\infty}=\|\Phi\|_{\infty}=1\),
  \begin{align}\label{eq:est-smooth-factor-k=0}
    \|\Phi E_{s,m}(\mathbf{p})^{-1}D_{\nu}\chi_{0} \|_{\mathcal{B}}
    \le C_{s}(m)\,.
  \end{align}
  This also holds for \(\widetilde{E}_{1/2,0}({\bf p})^{-1}\).

  For \(k>0\), the first factor on the right side of \eqref{f3} can
  (also for \(\widetilde{E}_{1/2,0}({\bf p})^{-1}\)) be estimated
  using \eqref{eq:smoothing-est-Lp} in Lemma~\ref{normsmooth-Lp}
  below (with \(\mathfrak{r}=1\),
  \(\mathfrak{q}=\mathfrak{q}^{*}=\mathfrak{p}=2\); note that
  \(|e_{\nu}+\beta_k|=k+1\ge2\)).  Since  
  \begin{align*}
    {\operatorname{dist}}({\operatorname{supp}}\,\chi_{k},{\operatorname{supp}}\,\Phi)\ge\epsilon
    (k-1+1/4)
  \end{align*}
  and \(\|\chi_k\|_{\infty}=\|\Phi\|_{\infty}=1\), this gives (since
  \((\beta_k+e_{\nu})!\le(|\beta_k|+1)!=(k+1)!\)) that
  \begin{align*}
    \| \Phi E_{s,m}(\mathbf{p})^{-1} &D_{\nu}D^{\beta_{k}} \chi_{k}
    \|_{\mathcal{B}} \\&\le c_{n,s}
    (k+1)!\,\Big(\frac{2n+2}{\epsilon
      (k-1+1/4)}\Big)^{k+1}\big[\epsilon(k-1+1/4)\big]^{2s}\,.
  \end{align*}
  Since \(2s-1\ge0\), and \(\epsilon(k-1+1/4)\le \epsilon(j+1)\le
  R/2\le1\), this implies
 \begin{align}\label{eq:est-smooth-factor-k>0}\notag
    \| \Phi E_{s,m}(\mathbf{p})^{-1} &D_{\nu}D^{\beta_{k}} \chi_{k}
    \|_{\mathcal{B}} \\&\le c_{n,s}8(2n+2)
    \Big(\frac{2n+2}{\epsilon}\Big)^{k}=\tilde{c}_{n,s}
    \Big(\frac{2n+2}{\epsilon}\Big)^{k}\,.
  \end{align}
  It follows from \eqref{eq:est-smooth-factor-k=0} and
  \eqref{eq:est-smooth-factor-k>0} that, for all
  \(k\in\{0,\ldots,j\}\), \(\nu\in\{1,\ldots,n\}\),
  \begin{align}\label{eq:est-smooth-factor-BIS}
    \| \Phi E_{s,m}(\mathbf{p})^{-1} D_{\nu}D^{\beta_k}\chi_{k}
    \|_{\mathcal{B}} \le \widetilde{C}_{n,s}(m)
    \Big(\frac{2n+2}{\epsilon}\Big)^{k} \,,
  \end{align}
with \(\widetilde{C}_{n,s}(m)=\tilde{c}_{n,s}+C_s(m)\).

  It remains to estimate the second factor in \eqref{f3}. For this, we
  employ the analyticity of \(V\).  Let \(A=A(\mathbf{x}_0)\ge1\) be
  such that, for all \(\sigma\in\mathbb{N}_{0}^{n}\),
  \begin{equation}\label{Wanal0-bis}
    \sup_{\mathbf{x} \in \omega} |D^{\sigma} V(\mathbf{x})| \leq A^{|\sigma|+1}
    |\sigma| !\,. 
  \end{equation}
  The existence of \(A\) follows from the real analyticity in
  \(\omega=B_{R}(\mathbf{x}_0)\subset\subset\Omega\) of \(V\) (see
  e.~g.~\cite[Proposition~2.2.10]{Krantz}).  It follows (since
  \(\omega_{\delta}=\emptyset\) for \(\delta\ge 1\)) that, for all
  \(\epsilon>0\), $\ell\in \mathbb{N}_{0}$, and
  \(\sigma\in\mathbb{N}_{0}^{n}\),
  \begin{equation}\label{Wanal}
    \epsilon^{|\sigma|} \sup_{\mathbf{x} \in \omega_{\epsilon \ell}}
    |D^{\sigma} V(\mathbf{x})| \leq A^{|\sigma|+1} |\sigma| ! \;
    {\ell}^{-|\sigma|}\,,    
  \end{equation}
  with \(\omega_{\epsilon \ell}\subseteq\omega\) as defined in
  Proposition~\ref{prop:first}.

  For $k=j$, since $\beta_{j}=\sigma$, we find, by \eqref{Wanal} and
  the choice of \(C\), that
  \begin{equation}\label{f4}
    \| \theta_{j}V\varphi\|_{2} \leq \|V\|_{L^{\infty}(\omega)}
    \|\varphi\|_{L^{2}(\omega)}
    \le CA\,. 
  \end{equation}

  For $k \in \{0, \dots, j-1\}$ we get, by Leibniz's rule, that
  \begin{align}\label{f5}\notag
    \| \theta_{k} D^{\sigma-\beta_{k}} &[V \varphi] \|_{2} \\&\leq
    \sum_{\mu \leq \sigma-\beta_{k}} \binom{\sigma-\beta_k}{\mu} \|
    \theta_{k} D^{\mu} V\|_{\infty} \,
    \|\theta_{k}D^{\sigma-\beta_{k}-\mu} \varphi\|_{2}\,.
  \end{align}
  Now,
  \({\operatorname{supp}}\,\theta_{k}={\operatorname{supp}}\,\chi_{k}\subseteq
  \omega_{\epsilon(j-k+1/4)}\), so by \eqref{Wanal}, for all \(\mu\le
  \sigma-\beta_k\),
  \begin{align}\label{eq:est-infty-norm-V}
    \| \theta_{k} D^{\mu} V\|_{\infty} \le
    \sup_{\mathbf{x}\in\omega_{\epsilon(j-k+1/4)}} |D^{\mu}
    V(\mathbf{x})| \le
    \epsilon^{-|\mu|}A^{|\mu|+1}|\mu|!(j-k)^{-|\mu|}\,.
  \end{align}
  By the induction hypothesis (in \eqref{eq:ihy-new}),
  \begin{align}\label{eq:est-phi-in-V-term}\nonumber
    \|\theta_{k}D^{\sigma-\beta_{k}-\mu} \varphi\|_{2} &\le
    \|D^{\sigma-\beta_{k}-\mu}
    \varphi\|_{L^{2}(\omega_{\epsilon(j-k)})} \\&\le
    C\Big(\frac{|\sigma-\beta_k-\mu|}{j-k}\Big)^{|\sigma-\beta_k-\mu|}
    \Big(\frac{B}{\epsilon}\Big)^{|\sigma-\beta_k-\mu|}\,.
  \end{align}
  It follows from \eqref{f5}, \eqref{eq:est-infty-norm-V}, and \eqref
  {eq:est-phi-in-V-term} (using that \(|\sigma|=j, |\beta_k|=k\), so
  \(\sum_{\mu\le\sigma-\beta_k,
    |\mu|=m}\binom{\sigma-\beta_k}{\mu}=\binom{|\sigma-\beta_k|}{m}=\binom{j-k}{m}\),
  and then summing over \(m\)) that
  \begin{align}\label{eq:first-sum-V-1-bis}
    \| \theta_{k}D^{\sigma-\beta_{k}}&[V \varphi]\|_{2} \\\notag&\le C A
    \Big(\frac{B}{\epsilon}\Big)^{j-k} \sum_{m=0}^{j-k} \binom{j-k}{m}
    \frac{m!(j-k-m)^{j-k-m}}{(j-k)^{j-k}} \Big(\frac{A}{B}\Big)^{m}\,.
  \end{align}
  As in \cite[(62)]{ours}, this implies (choosing \(B>2A\)), that, for
  any \(k\in\{0,\ldots,j-1\}\),
  \begin{align}\label{eq:first-sum-V-1}
    \| \theta_{k}D^{\sigma-\beta_{k}}[V \varphi]\|_{2} &\le C A
    \Big(\frac{B}{\epsilon}\Big)^{j-k} \sum_{m=0}^{j-k}
    \Big(\frac{A}{B}\Big)^{m} \le 2C A
    \Big(\frac{B}{\epsilon}\Big)^{j-k}\,.
  \end{align}
  Note that, by \eqref{f4}, the same estimate holds true if \(k=j\).

  So, from \eqref{f3}, \eqref{eq:est-smooth-factor-BIS},
  \eqref{eq:first-sum-V-1}, the fact that \(\epsilon\le1\) (since
  \(\epsilon(j+1)\le R/2\le1/2\)), and choosing \(B>4n+4,
  B>12A\widetilde{C}_{n,s}(m)\), it follows that (also for
  \(\widetilde{E}_{1/2,0}({\bf p})^{-1}\))
  \begin{align}\label{eq:final-first-sum-V}\notag
    \Big\|\sum_{k=0}^{j} \Phi D_{\nu} E_{s,m}(\mathbf{p})^{-1}
   & D^{\beta_{k}} \chi_{k} D^{\sigma-\beta_{k}} [V \varphi]\Big\|_{2}
    \\\notag&\le 2CA \widetilde{C}_{n,s}(m)\Big(\frac{B}{\epsilon}\Big)^{j}
    \sum_{k=0}^{j} \Big(\frac{2n+2}{B}\Big)^{k}
       \\&\le C(4 A
    \widetilde{C}_{n,s}(m))\Big(\frac{B}{\epsilon}\Big)^{j} \le
    \frac{C}{3}\Big(\frac{B}{\epsilon}\Big)^{j+1}\,. 
  \end{align}

\ %

\noindent{\bf The second sum in \eqref{f2}.}
The second sum in \eqref{f2} is the first one with
  \(j\) replaced by \(j-1\) and \(\chi_k\) replaced by
  \([\eta_k,D^{\mu_k}]=-D^{\mu_k}\eta_k\).  
Hence, using that \(\epsilon\le1\),
  the choice of \(B\) above, and choosing \(B\ge C_{*}\) (see
  \eqref{eq:est-der-loc} for \(C_{*}\)), we get that
  \begin{align}\label{eq:second-sum-total-final-V}
    \Big\|&\sum_{k=0}^{j-1} \Phi
    D_{\nu}E_{s,m}(\mathbf{p})^{-1}D^{\beta_{k}}
    [\eta_{k},D^{\mu_{k}}] D^{\sigma-\beta_{k+1}} [V
    \varphi]\Big\|_{2} \\&\le \frac{C_{*}}{\epsilon} C(4A
    \widetilde{C}_{n,s}(m))\Big(\frac{B}{\epsilon}\Big)^{j-1} \le C
    (4A\widetilde{C}_{n,s}(m)) \Big(\frac{B}{\epsilon}\Big)^{j} \le
    \frac{C}{3}\Big(\frac{B}{\epsilon}\Big)^{j+1}\,.\nonumber
  \end{align}

\ %

\noindent{\bf The last term in \eqref{f2}.}
It remains to study
\begin{align}\label{the-term}
  \Phi D^{\beta} E_{s,m}(\mathbf{p})^{-1}[\eta_{j} V \varphi] \ \text{
    and }\ \Phi D^{\beta}
  \widetilde{E}_{1/2,0}(\mathbf{p})^{-1}[\eta_{j} \widetilde{V}
  \varphi]\,.
\end{align}
Recall that \(\Phi\) is supported in \(\omega_{\epsilon(j+1)}\) and
(see Appendix)
\begin{align}\label{d-bis}
  {\operatorname{dist}}({\operatorname{supp}}\,
  \Phi,{\operatorname{supp}}\, \eta_j)\ge \epsilon(j+1/4)\,.
\end{align}
Recall that \(V=V_1+V_2\in L^{\infty}(\mathbb{R}^{n})+L^{t}(\mathbb{R}^{n})\)
for some \(t\) (see \eqref{cond-L-t-on-V}).  Again, we use
Lemma~\ref{normsmooth-Lp} (twice), this time with
\(\mathfrak{q}=\mathfrak{q}^{*}=2\) (both times), and \(\mathfrak{p}_{1}=2\),
\(\mathfrak{r}_{1}=1\) (for \(V_1\)), and
\(\mathfrak{p}_{2}=\max\{2n/(n+4s),1\}\),
\(\mathfrak{r}_{2}=\min\{n/(n-2s),2\}\) (for \(V_2\)). 
Then (for \(i=1,2\)) \(\mathfrak{p}_{i}^{-1}+\mathfrak{q}^{-1}+\mathfrak{r}_{i}^{-1}=2\),
\(\mathfrak{p}_{i}\in[1,\infty)\), \(\mathfrak{q}>1\), \(\mathfrak{r}_{i}\in
[1,\infty)\), and \(\mathfrak{q}^{-1}+{{\mathfrak{q}}^{*}}^{-1}=1\).
Also, \(|\beta|=j+1\ge2\).
Lemma~\ref{normsmooth-Lp} therefore gives that, for \(i=1,2\),
\begin{align}\label{est-outside}
  \|\Phi D^{\beta} E_{s,m}(\mathbf{p})^{-1}[\eta_{j} V_{i}\varphi]\|_{2}
  &\le \|\Phi D^{\beta}
  E_{s,m}(\mathbf{p})^{-1}\eta_{j}\|_{\mathcal{B}_{\mathfrak{p}_{i},2}}
  \|V_{i}\varphi\|_{\mathfrak{p}_{i}} \\&\le c_{n,s,\mathfrak{r}_{i}}
  \,\beta!\,\Big(\frac{2n+2}{\epsilon(j+1/4)}\Big)^{|\beta|-2s+n(1-1/\mathfrak{r}_{i})}
  \|V_{i}\varphi\|_{\mathfrak{p}_{i}}\,.\notag
\end{align}
As before, we used that \(\|\Phi\|_{\infty}=\|\eta_{j}\|_{\infty}=1\).
The same estimate holds for
\(\widetilde{E}_{1/2,0}(\mathbf{p})^{-1}\).  Note that
\begin{align}\label{est:beta-fak}\notag
  \beta!&\Big(\frac{2n+2}{j+1/4}\Big)^{|\beta|}  \\&\le
  \frac{(8n+8)^{|\beta|}|\beta|!}{(j+1)^{|\beta|}}
  =\frac{(8n+8)^{|\beta|}(j+1)!}{(j+1)^{j+1}}\le (8n+8)^{|\beta|}\,.
\end{align}
Since \(\epsilon(j+1)\le R/2<1\) and \(n(1/\mathfrak{r}_{i}-1)+2s\ge0\)
(for both \(i=1\) and \(i=2\)), it follows that
\((\epsilon(j+1/4))^{n(1/\mathfrak{r}_{i}-1)+2s}\le 1\).  
Therefore (also for \(\widetilde{E}_{1/2,0}(\mathbf{p})^{-1}\))
\begin{align}\label{sec-est-bis-4}
  \|\Phi D^{\beta} E_{s,m}(\mathbf{p})^{-1}\eta_{j} V_{i}\|_{2} \le
  \overline{c}_{n,s, \mathfrak{r}_{i}}
  \Big(\frac{8n+8}{\epsilon}\Big)^{|\beta|}
  \|V_{i}\varphi\|_{\mathfrak{p}_{i}}\,,i=1,2\,.
\end{align}

It remains to note that, using the stated conditions on \(t\) (see
\eqref{cond-L-t-on-V}), Sobolev embedding (for \(\varphi\in
H^{2s}(\mathbb{R}^{n})\)), and H{\"o}lder's inequality, one has (in
all cases), \(\|V_{i}\varphi\|_{\mathfrak{p}_{i}}<\infty\), for the stated
choices of \(\mathfrak{p}_{i}\), \(i=1,2\). Hence, choosing \(B>8n+8\)
and \(C\ge 6\max_{i=1,2}\{\overline{c}_{n,s,\mathfrak{r}_{i}}\|V_{i}\varphi\|_{\mathfrak{p}_{i}}\}\)
(recall that \(|\beta|=j+1\)),
\begin{align}\label{sec-est-bis-5}
  \|\Phi D^{\beta} E_{s,m}(\mathbf{p})^{-1}\eta_{j} V\|_{2}
  \le\frac{C}{3}\Big(\frac{B}{\epsilon}\Big)^{j+1}\,.
\end{align}
The same holds for \(\widetilde{E}_{1/2,0}({\bf p})^{-1}\).

The estimate \eqref{est-V} now follows from \eqref{f2} and the
estimates \eqref{eq:final-first-sum-V},
\eqref{eq:second-sum-total-final-V}, and~\eqref{sec-est-bis-4}.  
 \quad\ 
\end{proof}
\section*{Appendix A.}
Recall (see \eqref{def:Phi}) that we have chosen a function $\Phi$
(depending on \(j\)) satisfying
\begin{equation}\label{def:Phi-BIS}
  \Phi \in C^{\infty}_{0}(\omega_{\epsilon (j+3/4)})\,,\quad
  0\le\Phi\le1\,,\quad
  \mbox{
    with }\; \Phi \equiv 1 \; \mbox{ on }\; \omega_{\epsilon(j+1)}\,. 
\end{equation}

For $j\in\mathbb{N}$ we choose functions $\{\chi_{k}\}_{k=0}^{j}$, and
$\{\eta_{k}\}_{k=0}^{j}$ (all depending on \(j\)) with the following
properties (see \cite[Figures 1 and 2]{ours}).
The functions $\{\chi_{k}\}_{k=0}^{j}$ are such that
\begin{align*}
  \chi_{0} \in C^{\infty}_{0}(\omega_{\epsilon (j+1/4)}) \ \;\mbox{
    with }\; \ \, \chi_{0} \equiv 1 \; \ \ \,\mbox{ on }\
  \omega_{\epsilon(j+1/2)}\,,
\end{align*}
and, for $k= 1, \dots, j$,
\begin{align*}
  \chi_{k} &\in C^{\infty}_{0}(\omega_{\epsilon (j-k+1/4)}) 
  \\&\quad\text{with}
  \quad\begin{cases}
    \chi_{k} \equiv 1 & \text{on $\omega_{\epsilon(j-k+1/2)}\setminus
      \omega_{\epsilon(j-k+1+1/4)}\,,$}
      \\
      \chi_{k} \equiv 0 & \text{on $\mathbb{R}^n \setminus (
      \omega_{\epsilon(j-k+1/4)}\setminus
      \omega_{\epsilon(j-k+1+1/2)})\,.$}
    \end{cases}
\end{align*}
Finally, the functions $\{\eta_{k}\}_{k=0}^{j}$ are such that for $k=
0, \dots, j$,
\begin{equation*}
  \eta_{k} \in C^{\infty}(\mathbb{R}^n) 
  \quad\text{with} \quad
  \begin{cases}
    \eta_{k} \equiv 1 & \text{on $\mathbb{R}^n \setminus
      \omega_{\epsilon(j-k+1/4)}\,,$}
    \\
    \eta_{k} \equiv 0  & \text{on $
    \omega_{\epsilon(j-k+1/2)}\,.$}
  \end{cases}
\end{equation*}
Moreover we ask that
\begin{equation}\label{chieta}
  \begin{array}{lll}
    \chi_{0}+\eta_{0} \equiv 1 
    & \mbox{ on } \mathbb{R}^n,\\ 
    \chi_{k}+\eta_{k} \equiv 1 
    & \mbox{ on } \mathbb{R}^n \setminus
    \omega_{\epsilon(j-k+1+1/4)}\ \mbox{ for }\ k=1,\dots,j\,,\\ 
    \eta_{k} \equiv \chi_{k+1}+\eta_{k+1} 
    & \mbox{ on }  \mathbb{R}^n \ \mbox{
      for }\ k=0,\ldots, j-1\,. 
  \end{array}
\end{equation}
Lastly, we choose these localization functions such that, for a
constant $C_{*}>0$ (independent of \(\epsilon, k, j, \beta\)) and for
all $\beta \in \mathbb{N}_{0}^n$ with $|\beta|=1$, we have that
\begin{equation}\label{eq:est-der-loc}
  |D^{\beta}\chi_{k}(\mathbf{x})| \leq \frac{C_{*}}{\epsilon}
  \quad\text{and}\quad  |D^{\beta}\eta_{k}(\mathbf{x})| \leq
  \frac{C_{*}}{\epsilon}\,, 
\end{equation}
for $k=0, \dots,j$, and all \(\mathbf{x}\in\mathbb{R}^n\).

The next lemma shows how to use these localization functions.
\begin{lemma}\label{Edgardo}
  For $j\in\mathbb{N}$ fixed, choose functions
  $\{\chi_{k}\}_{k=0}^{j}$, and $\{\eta_{k}\}_{k=0}^{j}$ as above, and
  let $\sigma \in \mathbb{N}_{0}^n$ with $|\sigma|=j$. For $\ell \in
  \mathbb{N}$ with $\ell \leq j$, choose multiindices
  $\{\beta_{k}\}_{k=0}^{\ell}\subset \mathbb{N}_{0}^{n}$ such that:
  \begin{equation*}
    |\beta_{k}|=k \qquad\text{for }k=0, \dots, \ell\; ,\;  \beta_{k-1}<\beta_{k}
    \; \mbox{ for } \; k=1, \dots, \ell, \text{ and }  \beta_{\ell}\leq
    \sigma\,. 
  \end{equation*}

  Then for all $g \in \mathcal{S}'(\mathbb{R}^n)$,
  \begin{align}\label{eq:form-localization}\notag
    D^{\sigma}g &= \sum_{k=0}^{\ell} D^{\beta_{k}} \chi_{k}
    D^{\sigma-\beta_{k}} g \\&{\ }+ \sum_{k=0}^{\ell-1}
    D^{\beta_{k}}[\eta_{k},D^{\mu_{k}}] D^{\sigma-\beta_{k+1}} g +
    D^{\beta_{\ell}}\eta_{\ell} D^{\sigma-\beta_{\ell}} g\,,
  \end{align}
  with $\mu_{k}=\beta_{k+1}-\beta_{k}$ for $k=0, \dots, \ell-1$
  (hence, \(|\mu_k|=1\)).
\end{lemma}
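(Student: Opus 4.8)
The plan is to prove the identity \eqref{eq:form-localization} by induction on $\ell$, extracting one pair of localization functions at each step. Before starting I would record that every operator in \eqref{eq:form-localization} acts on $\mathcal{S}'(\mathbb{R}^n)$: each $\chi_k$ lies in $C_0^\infty(\mathbb{R}^n)$, and each $\eta_k$ is smooth, bounded (by the normalization $0\le\eta_k\le1$), with all derivatives of order $\ge1$ compactly supported, hence bounded; thus $\chi_k,\eta_k$ are slowly increasing smooth multipliers and multiplication by them maps $\mathcal{S}'$ continuously into $\mathcal{S}'$. Moreover, by the Leibniz rule for a smooth multiplier times a tempered distribution, for $|\mu_k|=1$ one has $[\eta_k,D^{\mu_k}]g=\eta_k D^{\mu_k}g-D^{\mu_k}(\eta_k g)=-(D^{\mu_k}\eta_k)\,g$, i.e.\ $[\eta_k,D^{\mu_k}]$ is just multiplication by the $C_0^\infty$ function $-D^{\mu_k}\eta_k$. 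Hence all terms in \eqref{eq:form-localization} are well-defined in $\mathcal{S}'$, and the manipulations below are legitimate there.

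For the base case $\ell=0$ we have $\beta_0=0$, the middle sum is empty, and the right-hand side of \eqref{eq:form-localization} is $\chi_0D^\sigma g+\eta_0D^\sigma g=(\chi_0+\eta_0)D^\sigma g=D^\sigma g$ by the first relation in \eqref{chieta}. Now suppose \eqref{eq:form-localization} holds with $\ell$ replaced by $\ell-1$, for some $1\le\ell\le j$. Given an admissible chain $0=\beta_0<\beta_1<\cdots<\beta_\ell\le\sigma$, apply the $(\ell-1)$-version to its truncation $\beta_0<\cdots<\beta_{\ell-1}$; it then remains only to expand the trailing term $D^{\beta_{\ell-1}}\eta_{\ell-1}D^{\sigma-\beta_{\ell-1}}g$. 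Since $\beta_\ell=\beta_{\ell-1}+\mu_{\ell-1}$ with $|\mu_{\ell-1}|=1$ and $\beta_\ell\le\sigma$, we have $\sigma-\beta_{\ell-1}=\mu_{\ell-1}+(\sigma-\beta_\ell)$ with $\sigma-\beta_\ell\in\mathbb{N}_0^n$, so
\[
  D^{\beta_{\ell-1}}\eta_{\ell-1}D^{\sigma-\beta_{\ell-1}}g
  =D^{\beta_{\ell-1}}\eta_{\ell-1}D^{\mu_{\ell-1}}D^{\sigma-\beta_\ell}g .
\]
Writing $\eta_{\ell-1}D^{\mu_{\ell-1}}=D^{\mu_{\ell-1}}\eta_{\ell-1}+[\eta_{\ell-1},D^{\mu_{\ell-1}}]$ and using $D^{\beta_{\ell-1}}D^{\mu_{\ell-1}}=D^{\beta_\ell}$ gives
\[
  D^{\beta_{\ell-1}}\eta_{\ell-1}D^{\sigma-\beta_{\ell-1}}g
  =D^{\beta_\ell}\eta_{\ell-1}D^{\sigma-\beta_\ell}g
  +D^{\beta_{\ell-1}}[\eta_{\ell-1},D^{\mu_{\ell-1}}]D^{\sigma-\beta_\ell}g ,
\]
and finally the third relation in \eqref{chieta} (valid on all of $\mathbb{R}^n$ since $\ell-1\le j-1$) gives $\eta_{\ell-1}\equiv\chi_\ell+\eta_\ell$, whence
\[
  D^{\beta_\ell}\eta_{\ell-1}D^{\sigma-\beta_\ell}g
  =D^{\beta_\ell}\chi_\ell D^{\sigma-\beta_\ell}g+D^{\beta_\ell}\eta_\ell D^{\sigma-\beta_\ell}g .
\]

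Substituting these back, the term $D^{\beta_\ell}\chi_\ell D^{\sigma-\beta_\ell}g$ becomes the $k=\ell$ summand of the first sum, the commutator term is the $k=\ell-1$ summand of the middle sum (whose argument is indeed $D^{\sigma-\beta_{k+1}}=D^{\sigma-\beta_\ell}$), and $D^{\beta_\ell}\eta_\ell D^{\sigma-\beta_\ell}g$ is the new trailing term; this is exactly \eqref{eq:form-localization} for $\ell$, and the induction closes.

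The argument is essentially formal, so I do not expect a genuine analytic difficulty; the two points that need care are (i) that every step makes sense in $\mathcal{S}'$ — handled by the multiplier remarks above, in particular that $[\eta_k,D^{\mu_k}]$ is mere multiplication by a compactly supported smooth function — and (ii) the index bookkeeping: checking that each exponent shift $\sigma-\beta_k\mapsto\mu_k+(\sigma-\beta_{k+1})$ keeps all multiindices in $\mathbb{N}_0^n$ (ensured by $\beta_{k+1}\le\cdots\le\beta_\ell\le\sigma$) and that the pieces peeled off at step $\ell$ land in precisely the claimed slots of the two sums. This bookkeeping is the part most likely to trip one up.
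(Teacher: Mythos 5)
Your proof is correct, and it is essentially the argument behind the paper's (outsourced) proof: the paper defers to \cite[Lemma~B.1]{ours}, where the identity is likewise obtained by peeling off one localization pair at a time via the telescoping relations \(\chi_0+\eta_0\equiv1\) and \(\eta_k\equiv\chi_{k+1}+\eta_{k+1}\) in \eqref{chieta}, together with \([\eta_k,D^{\mu_k}]=-(D^{\mu_k}\eta_k)\). Your bookkeeping in the induction step and the remarks justifying the manipulations in \(\mathcal{S}'(\mathbb{R}^n)\) are accurate, so nothing is missing.
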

For a proof, see \cite[Lemma~B.1]{ours}.

For \(\mathfrak{p},\mathfrak{q}\in[1,\infty]\), denote by
\(\|\cdot\|_{\mathcal{B}_{\mathfrak{p},\mathfrak{q}}}\) the operator
norm on bounded operators from \(L^{\mathfrak{p}}(\mathbb{R}^{n})\) to
\(L^{\mathfrak{q}}(\mathbb{R}^{n})\).
\begin{lemma}\label{normsmooth-Lp}
  Let \(n\ge1\), \(\beta\in\mathbb{N}_{0}^{n}\) with \(|\beta|\ge2\). For \(s\in(0,1)\),
  \(m>0\), let \(E_{s,m}({\bf 
    p})^{-1}=(-\Delta+m^2)^{-s}\). 
  For all
  \(\mathfrak{p},\mathfrak{r}\in[1,\infty)\),
  \(\mathfrak{q}\in(1,\infty)\), with
  \(\mathfrak{p}^{-1}+\mathfrak{q}^{-1}+\mathfrak{r}^{-1}=2\), 
and all $\Phi, \chi \in
  C^{\infty}(\mathbb{R}^n)\cap L^{\infty}(\mathbb{R}^n)$ with
  \begin{align}\label{eq:dist-supports}
    {\operatorname{dist}}({\operatorname{supp}}(\chi),
    {\operatorname{supp}}(\Phi)) \geq d\,,
  \end{align}
  the operator \(\Phi E_{s,m}(\mathbf{p})^{-1}D^{\beta}\chi\) is
  bounded from \(L^{\mathfrak{p}}(\mathbb{R}^n)\) to
  \((L^{\mathfrak{q}}(\mathbb{R}^n))'=
  L^{\mathfrak{q}^{*}}(\mathbb{R}^n)\) (with
  \(\mathfrak{q}^{-1}+{\mathfrak{q}^{*}}^{-1}=1\)), and
  \begin{align}\label{eq:smoothing-est-Lp}\notag
    \|\Phi
    E_{s,m}(\mathbf{p})^{-1}&D^{\beta}\chi\|_{\mathcal{B}_{\mathfrak{p},\mathfrak{q}^{*}}}
     \\&\le c_{n,s,\mathfrak{r}}\,
    \beta!\,\Big(\frac{2n+2}{d}\Big)^{|\beta|-2s+n(1-1/\mathfrak{r})}
    \|\Phi\|_{\infty} \|\chi\|_{\infty}\,.
  \end{align}

  The operator \(\widetilde{E}_{1/2,0}({\bf
    p})^{-1}:=\big((-\Delta)^{1/2}+1\big)^{-1}\) satisfies the estimates
  \eqref{eq:smoothing-est-Lp}  
with \(s=1/2\).
\end{lemma}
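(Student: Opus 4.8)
The plan is to realize $\Phi E_{s,m}(\mathbf{p})^{-1}D^{\beta}\chi$ as an integral operator with an explicit, \emph{analytic} convolution kernel evaluated away from its singularity, and then to combine a Cauchy estimate on that kernel with Young's inequality for convolutions. Since $(-\Delta+m^{2})^{-s}$ and $((-\Delta)^{1/2}+1)^{-1}$ are Fourier multipliers with nonnegative, integrable symbols, each is convolution by a kernel $G$, given by subordination as
\[
 G_{s,m}(\bx)=\frac{1}{\Gamma(s)}\int_{0}^{\infty}t^{s-1}e^{-tm^{2}}(4\pi t)^{-n/2}e^{-|\bx|^{2}/(4t)}\,dt,\qquad
 \widetilde G(\bx)=c_{n}\int_{0}^{\infty}e^{-\tau}\tau\,(\tau^{2}+|\bx|^{2})^{-(n+1)/2}\,d\tau .
\]
The symbol of $E_{s,m}(\mathbf{p})^{-1}D^{\beta}$ is $(i\xi)^{\beta}\widehat{G_{s,m}}(\xi)$, so its kernel is $D^{\beta}G_{s,m}$; hence $\Phi E_{s,m}(\mathbf{p})^{-1}D^{\beta}\chi$ is the integral operator with kernel $\Phi(\bx)(D^{\beta}G_{s,m})(\bx-\by)\chi(\by)$, and on $\supp\Phi\times\supp\chi$ one has $|\bx-\by|\ge d$ by \eqref{eq:dist-supports} — this representation is literally valid precisely because the two supports are disjoint, so only the smooth part of the multiplier's kernel enters. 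The same applies to $\widetilde E_{1/2,0}(\mathbf{p})^{-1}$ with kernel $D^{\beta}\widetilde G$.

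\textbf{Size and analyticity of the kernel.} First I would record that $G_{s,m}$ is radial, $G_{s,m}(\bx)=F(|\bx|^{2})$ with $F$ holomorphic on $\{\operatorname{Re}\zeta>0\}$ (differentiate the $t$-integral under the integral sign; the Gaussian factor makes $t\to0$ harmless and $e^{-tm^{2}}$ handles $t\to\infty$), hence $G_{s,m}$ is real analytic on $\mathbb{R}^{n}\setminus\{0\}$, and likewise for $\widetilde G$. From the integral representations, estimating $|F(\zeta)|$ by $|F(|\zeta|\,)|$-type bounds, one gets, for $0<|\bx|\le 1$, $|G_{s,m}(\bx)|\le C_{n,s}|\bx|^{2s-n}$ when $2s<n$ and $|G_{s,m}(\bx)|\le C_{n,s}$ when $2s>n$, with a harmless logarithmic factor in the single borderline case $2s=n$ (i.e. $n=1,s=1/2$); for $|\bx|\ge1$ one has exponential decay when $m>0$, while $|\widetilde G(\bx)|\le c_{n}|\bx|^{-(n+1)}$. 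The key Cauchy estimate is then: fix $\bx$ with $|\bx|=\rho>0$ and put $r=\rho/(2n+2)$; for $\bw=\bx+\zeta$ with $\zeta\in\C^{n}$, $|\zeta_{j}|<r$, one computes $\bigl|\sum_{j}w_{j}^{2}-\rho^{2}\bigr|\le 2\sqrt{n}\,\rho r+nr^{2}\le\tfrac34\rho^{2}$, so $\operatorname{Re}\sum_{j}w_{j}^{2}\ge\rho^{2}/4>0$ and $\bigl|\sum_{j}w_{j}^{2}\bigr|\le\tfrac74\rho^{2}$; thus $G_{s,m}$ extends holomorphically to this polydisc with $|G_{s,m}|\le C_{n,s}\rho^{2s-n}$ there (and the analogous bound for $\widetilde G$). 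The multivariate Cauchy inequalities give
\[
 |D^{\beta}G_{s,m}(\bx)|\le \beta!\,r^{-|\beta|}\!\!\sup_{|\zeta_{j}|<r}|G_{s,m}(\bx+\zeta)|\le C_{n,s}\,\beta!\Big(\frac{2n+2}{\rho}\Big)^{|\beta|}\rho^{2s-n},
\]
and similarly for $\widetilde G$ (with $\rho^{2s-n}$ replaced by $\min\{\rho^{2s-n},\rho^{-(n+1)}\}$).

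\textbf{Young's inequality and the radial integral.} Set $K(\bz)=|D^{\beta}G_{s,m}(\bz)|\,\1_{\{|\bz|\ge d\}}$. By the kernel representation, $\bigl|(\Phi E_{s,m}(\mathbf{p})^{-1}D^{\beta}\chi)f(\bx)\bigr|\le\|\Phi\|_{\infty}\|\chi\|_{\infty}(K*|f|)(\bx)$ pointwise, and the hypothesis $\mathfrak{p}^{-1}+\mathfrak{q}^{-1}+\mathfrak{r}^{-1}=2$ together with $\mathfrak{q}^{-1}+{\mathfrak{q}^{*}}^{-1}=1$ is exactly $\mathfrak{p}^{-1}+\mathfrak{r}^{-1}=1+{\mathfrak{q}^{*}}^{-1}$, so Young's inequality yields $\|\Phi E_{s,m}(\mathbf{p})^{-1}D^{\beta}\chi\|_{\mathcal{B}_{\mathfrak{p},\mathfrak{q}^{*}}}\le\|\Phi\|_{\infty}\|\chi\|_{\infty}\|K\|_{\mathfrak{r}}$. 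Finally one estimates
\[
 \|K\|_{\mathfrak{r}}^{\mathfrak{r}}\le C_{n,s}\,\beta!^{\mathfrak{r}}(2n+2)^{\mathfrak{r}|\beta|}\!\int_{|\bz|\ge d}\!\!|\bz|^{-\mathfrak{r}|\beta|+\mathfrak{r}(2s-n)}d\bz= C_{n,s}'\,\beta!^{\mathfrak{r}}(2n+2)^{\mathfrak{r}|\beta|}\!\int_{d}^{\infty}\!\!\rho^{\,\mathfrak{r}(2s-n-|\beta|)+n-1}d\rho ,
\]
where (using $|\beta|\ge2$ and $s<1$) the exponent $\mathfrak{r}(2s-n-|\beta|)+n$ is negative and bounded away from $0$ uniformly in $\beta$ and $\mathfrak{r}$ (it is $\le -2\mathfrak{r}(1-s)<0$), so the integral is a bounded constant times $d^{\,\mathfrak{r}(2s-n-|\beta|)+n}$. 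Taking $\mathfrak{r}$-th roots and absorbing the factor $(2n+2)^{2s-n(1-1/\mathfrak{r})}$ into the constant gives exactly \eqref{eq:smoothing-est-Lp}, with the $\|\Phi\|_{\infty}\|\chi\|_{\infty}$ reinstated. For $\widetilde G$ (no exponential decay) I would split $\int_{|\bz|\ge d}=\int_{d\le|\bz|\le1}+\int_{|\bz|\ge1}$: the first integral is treated as above with $2s=1$, and the second is a convergent constant dominated by the right-hand side of \eqref{eq:smoothing-est-Lp} since its exponent $|\beta|-2s+n(1-1/\mathfrak{r})$ is positive (and, in the application, $d\le1$). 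The borderline logarithm at $n=1$, $s=1/2$ is lower order and gets absorbed in \eqref{est-outside} via $\rho^{\varepsilon}\log(1/\rho)\le C_{\varepsilon}$.

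\textbf{Main obstacle.} Everything after the kernel bound is Young's inequality and elementary radial integration; the real content is the quantitative analytic estimate on $D^{\beta}G$ — namely controlling the holomorphic extension of the radial profile $F$ on a polydisc whose radius is a \emph{fixed} fraction $1/(2n+2)$ of $|\bx|$, with the correct singularity exponent $2s-n$ at the origin, and organizing the three regimes $2s<n$, $2s>n$, $2s=n$. That is where the constant $2n+2$ and the exponent $|\beta|-2s+n(1-1/\mathfrak{r})$ in \eqref{eq:smoothing-est-Lp} come from, and it is the step requiring care.
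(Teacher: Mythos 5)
Your kernel representation, the Cauchy estimate on a polydisc of radius $|\mathbf{x}|/(2n+2)$, and the reduction via Young's inequality to bounding $\|K\|_{\mathfrak{r}}$ are exactly the paper's strategy (the paper applies the Cauchy-type bound to the Gaussian inside the subordination integral and then integrates in the subordination parameter, which for $2s<n$ gives the same pointwise bound $|D^{\beta}G_{s,m}(\mathbf{z})|\le C\,\beta!\,((2n+2)/|\mathbf{z}|)^{|\beta|}|\mathbf{z}|^{2s-n}$ that you obtain). So for $n\ge2$ (and $n=1$, $s<1/2$) your argument is sound and essentially identical to the paper's.

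The genuine gap is in the one-dimensional cases $2s\ge n$, i.e.\ $n=1$ with $s\in[1/2,1)$, $m>0$, and the massless operator $\widetilde{E}_{1/2,0}(\mathbf{p})^{-1}$ in $n=1$ --- precisely the cases the main theorem needs when $n=1$. There your polydisc supremum is only $O(1)$ (respectively $O(\log(1/\rho))$ in the borderline case $2s=n$), not $O(\rho^{2s-n})$, because $G_{s,m}(0)$ is a finite ($m$-dependent) constant while $\rho^{2s-n}\to0$; feeding this into the Cauchy inequality and the radial integral yields $\|K\|_{\mathfrak{r}}\lesssim C_{m}\,\beta!\,4^{|\beta|}d^{1/\mathfrak{r}-|\beta|}$, which exceeds the claimed bound $c\,\beta!\,(4/d)^{|\beta|-2s+1-1/\mathfrak{r}}$ by the unbounded factor $d^{-(2s-1)}$ (respectively $\log(1/d)$) as $d\to0$; your proposed remedy $\rho^{\varepsilon}\log(1/\rho)\le C_{\varepsilon}$ changes the exponent and therefore proves only a strictly weaker inequality than \eqref{eq:smoothing-est-Lp}, and appealing to what suffices "in the application" does not establish the lemma as stated (it also makes the constant $m$-dependent, whereas the statement and the paper's proof have $c_{n,s,\mathfrak{r}}$ independent of $m$). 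The point is that for $|\beta|\ge1$ the derivatives genuinely behave like $|\mathbf{z}|^{2s-1-|\beta|}$, but the plain Cauchy estimate applied to $G$ itself cannot see this cancellation. The paper avoids the problem by treating $n=1$ separately: it uses the explicit kernels $e^{-\sqrt{m^2+t}\,|z|}/2\sqrt{m^2+t}$ (resp.\ the representation \eqref{eq:kernel-mass-zero}), differentiates $e^{-t|z|}$ exactly (no Cauchy loss), applies Minkowski's integral inequality in the subordination variable, and then a maximization/Stirling step to produce the factor $\beta!$ and the exact power of $d$ as in \eqref{H-final-n=1}. To salvage your route you would need an analogous sharpening, e.g.\ performing the Cauchy estimate at the level of $e^{-t|z|}$ inside the $t$-integral, or applying it to $G-G(0)$ (or to $DG$) together with a bound $|G(\mathbf{w})-G(0)|\lesssim\rho^{2s-1}$ on the complex polydisc; as written, the $n=1$, $2s\ge1$ cases are not proved.
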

\begin{proof}
The proof in \cite[Lemma~C.2]{ours} (for \(n=3, m>0, s=1/2\)) uses the
explicit expression for the Green's function for \(n=3\)
\cite[(IX.30)]{RS2}. We give a modified proof, which holds for all
\(n\ge1\), and \(m>0,s\in(0,1)\), as well as \(s=1/2, m=0\), referring to
\cite{ours} for further details. 

We start with \(m>0\), \(s\in(0,1)\). As a substitute for \cite[(C.5)]{ours} we use
\begin{align}\label{integral-formula-power-s}
  \frac{1}{x^s}=c_s\int_0^{\infty}\frac{1}{x+t}\,\frac{dt}{t^s}\ , \ x>0\,.
\end{align}
Denote
\begin{align}\label{green}
   (-\Delta+\lambda)^{-1}({\bf x},{\bf y})&=G_{n}^{\lambda}({\bf
     x}-{\bf y})\ , \quad\lambda>0\,,\  {\bf x}\,,{\bf y}\in\mathbb{R}^n\,, {\bf
     x}\neq{\bf y}\,.
\end{align}
Proceeding as in the proof of \cite[Lemma~C.2]{ours}, we need to
estimate \(\|H\|_{\mathfrak{r}}\), where
\begin{align}\label{formula-H}
  H({\bf z})=c_s
   {\1}_{\{|\,\cdot\,|\ge d\}}({\bf z})(-1)^{|\beta|}\int_0^{\infty}\big(D_{\bf
    z}^{\beta}G_{n}^{m^2+t}\big)({\bf z})\,\frac{dt}{t^s}\,.
\end{align}

For \(n\ge2\), we use (see e.\
g. \cite[(A.11)]{scott}) that 
\begin{align}\label{green-n-big}
   G_{n}^{\lambda}({\bf z})&=\int_0^{\infty}(4\pi u)^{-n/2}{\rm
     e}^{-\lambda u-|{\bf z}|^2/4u}\,du\ , \ {\bf z}\in \mathbb{R}^{n}\setminus\{0\}\,.
\end{align}
Modifying the proof of \cite[Lemma~C.3]{ours} appropriately (choose
\(r=|{\bf x}|/k\), \(\epsilon=1/k, k=2n+2\)), one has that 
\begin{align}\label{estimate-der}
  \big|(D_{\bf x}^{\beta}{\rm e}^{-|{\bf x}|^{2}})({\bf x})\big|\le \beta!\,
  \Big(\frac{2n+2}{|{\bf x}|}\Big)^{|\beta|}{\rm e}^{-|{\bf x}|^2/2}\,,\
  {\bf x}\in\mathbb{R}^{n}\setminus\{0\}\,,\,\beta\in\mathbb{N}_{0}^{n}\,.
\end{align}
Using this, and noting that 
\begin{align}\label{argument1}
  \big(D_{{\bf z}}^{\beta} {\rm e}^{-|{\bf z}|^2/4u}\big)({\bf z})
  = (2\sqrt{u})^{-|\beta|}(D^{\beta}f)({\bf z}/2\sqrt{u})\ ,
  \ f({\bf x})={\rm e}^{-|{\bf x}|^2}\,,
\end{align}
we get (since \({\rm e}^{-m^2u}\le1\)) the estimate 
\begin{align}\label{est-H}\nonumber
  \int_0^{\infty}\Big|\big(&D_{\bf
    z}^{\beta}G_{n}^{m^2+t}\big)({\bf z})\Big|\,\frac{dt}{t^s}
  \\&\le \Gamma(1-s)\beta!\,\Big(\frac{2n+2}{|{\bf
      z}|}\Big)^{|\beta|}\int_0^{\infty}u^{s-1}(4\pi u)^{-n/2}{\rm
     e}^{-|{\bf z}|^2/8u}\,du\,.
\end{align}
Making the change of variables \(v=|{\bf z}|^2/8u\), this implies that
\begin{align}\label{est-H2}
  \int_0^{\infty}\Big|\big(D_{\bf
    z}^{\beta}G_{n}^{m^2+t}\big)({\bf z})\Big|\,\frac{dt}{t^s}
  \le C_{n,s}\,\beta!\,\frac{1}{|{\bf z}|^{n-2s}}\Big(\frac{2n+2}{|{\bf
      z}|}\Big)^{|\beta|}\,.
\end{align}
Upon integration, this gives the estimate
\begin{align}\label{est-norm-H}\nonumber
  \|H\|_{\mathfrak{r}}&\le
  C_{n,s,\mathfrak{r}}\,\beta!\,(2n+2)^{|\beta|}
   \Big(\int_d^{\infty}|{\bf z}|^{-\mathfrak{r}(|\beta|+n-2s)}
  |{\bf z}|^{n-1}\,d|{\bf z}|\Big)^{1/\mathfrak{r}}
  \\& 
  \le\widetilde{C}_{n,s,\mathfrak{r}}\beta!\,
  \Big(\frac{2n+2}{d}\Big)^{|\beta|} 
  d^{\,n/\mathfrak{r}-(n-2s)}\big(\mathfrak{r}(|\beta|+n-2s)-n\big)^{-1/\mathfrak{r}}\,.
\end{align}
The integral is finite since, using \(|\beta|\ge2\), \(s\in(0,1)\),
\(\mathfrak{r}\ge1\), we
have
\(|\beta|-2s+n(1-1/\mathfrak{r})\ge 2-2s>0\). This also implies that
\begin{align}\label{stupid-factor}
  \big(\mathfrak{r}(|\beta|+n-2s)-n\big)^{-1/\mathfrak{r}}
  \le \frac{1}{\mathfrak{r}^{1/\mathfrak{r}}}\frac{1}{(2-2s)^{1/\mathfrak{r}}}\,,
\end{align}
and so
\begin{align}\label{final-H-n}\notag
  \|H\|_{\mathfrak{r}}&\le\tilde{c}_{n,s,\mathfrak{r}}\,\beta!\,
  \Big(\frac{2n+2}{d}\Big)^{|\beta|} d^{\,n/\mathfrak{r}-(n-2s)}
  \\&=c_{n,s,\mathfrak{r}}\,\beta!\,\Big(\frac{2n+2}{d}\Big)^{|\beta|-2s+n(1-1/\mathfrak{r})}\,.
\end{align}

For \(n=1\), \(G_1^{\lambda}(z)={\rm
  e}^{-\sqrt{\lambda}|z|}/2\sqrt{\lambda}\), \(z\in\mathbb{R}\),
\(\lambda>0\) 
(use \eqref{green-n-big}), so for \(\beta\in\mathbb{N}\),
\begin{align}\label{H-n=1}
  |H(z)|
    \le \frac{c_s}{2}  {\1}_{\{|\,\cdot\,|\ge
    d\}}(z)\int_0^\infty\big(\sqrt{m^2+t}\big)^{\beta-1}{\rm
    e}^{-\sqrt{m^2+t}\,|z|}\,\frac{dt}{t^s} \,.
\end{align}
Now, by Minkowski's inequality \cite[Theorem 2.4]{LiebLoss}, for
\(\mathfrak{r}\ge1\), 
\begin{align}\label{Minkowski-n=1}\nonumber
  \|H\|_{{\mathfrak{r}}}&
   \le
  \frac{c_s}{2}\int_0^\infty\big(\sqrt{m^2+t}\big)^{\beta-1}\Big(2\int_d^\infty
  {\rm
    e}^{-\mathfrak{r}\sqrt{m^2+t}\,z}\,dz\Big)^{1/\mathfrak{r}}\,\frac{dt}{t^s}
  \nonumber
  \\&=\frac{c_s2^{1/\mathfrak{r}-1}}{\mathfrak{r}^{1/\mathfrak{r}}}
  \int_0^{\infty}\big(\sqrt{m^2+t}\big)^{\beta-1-1/\mathfrak{r}}  
  {\rm e}^{-d\sqrt{m^2+t}}\,\frac{dt}{t^s}\,.
\end{align}
Now (maximise \(x^k{\rm e}^{-\alpha x}\); note that
\(\beta-1-1/\mathfrak{r}\ge0\) when \(\beta\ge2\), \(\mathfrak{r}\ge1\)),
\begin{align}\label{TOS' trick}
  \big(\sqrt{m^2+t}\big)^{\beta-1-1/\mathfrak{r}}{\rm
    e}^{-d\sqrt{m^2+t}/2}
  \le\Big(\frac{2(\beta-1-1/\mathfrak{r})}{{\rm
      e}d}\Big)^{\beta-1-1/\mathfrak{r}}\,. 
\end{align}
Hence,
\begin{align}\label{H-n=1-second}\nonumber
  \|H\|_{{\mathfrak{r}}}&\le c_{s}
  \frac{2^{1/\mathfrak{r}-1}}{\mathfrak{r}^{1/\mathfrak{r}}}
  \Big(\frac{2(\beta-1-1/\mathfrak{r})}{{\rm
      e}d}\Big)^{\beta-1-1/\mathfrak{r}}
  \int_0^\infty{\rm e}^{-d\sqrt{t}/2}\,\frac{dt}{t^s}
  \\&=
  \frac{\overline{c}_{s,r}}{d^{2-2s}}
  \Big(\frac{2(\beta-1-1/\mathfrak{r})}{{\rm
      e}d}\Big)^{\beta-1-1/\mathfrak{r}}\,.
\end{align}
It remains to note that, using 
\cite[(A.7)]{ours},  
we have that, for some \(\vartheta=\vartheta(\beta) \in (0,1)\), 
\begin{align}\label{beta-beta-to-factorial}\notag
  (\beta-1-1/\mathfrak{r})^{\beta-1-1/\mathfrak{r}}
  &\le(\beta-1)^{\beta-1/2}\\&=\frac{(\beta-1)!}{\sqrt{2\pi}}\,{\rm
    e}^{\beta-\vartheta/12(\beta-1)}
  \le \beta!\,{\rm e}^{\beta}\,.
\end{align}
Hence,
\begin{align}\label{H-final-n=1}
  \|H\|_{\mathfrak{r}}\le
  \tilde{c}_{1,s,\mathfrak{r}}\,\beta!\,\Big(\frac{4}{d}\Big)^{\beta}d^{1/\mathfrak{r}-(1-2s)}
  =c_{1,s,\mathfrak{r}}\,\beta!\,\Big(\frac{4}{d}\Big)^{\beta-2s+(1-1/\mathfrak{r})}\,.
\end{align}

For \(\widetilde{E}_{1/2,0}({\bf p})^{-1}\), one uses, for \(n\ge1\),
\begin{align}\label{eq:kernel-mass-zero}\notag
  \widetilde{E}_{1/2,0}&({\bf p})^{-1}({\bf x},{\bf y})
  \\&= \Gamma\Big(\frac{n+1}{2}\Big)\pi^{-(n+1)/2}
  \int_0^{\infty}\frac{{\rm e}^{-t|{\bf x}-{\bf
        y}|}}{|{\bf x}-{\bf y}|^{n-1}}\,\frac{t\,dt}{(t^2+1)^{(n+1)/2}}\,,
\end{align}
which follows from \((x+1)^{-1}=\int_0^{\infty}{\rm
  e}^{-tx}e^{-t}\,dt\), and the explicit expression for the heat
kernel in \(\mathbb{R}^{n}\) of \((-\Delta)^{1/2}\) (see
\cite[Section~7.11(10)]{LiebLoss}). 

For \(n\ge2\), one also needs the estimate
\begin{align}\label{eq:est-der}\nonumber
  \Big|D_{\mathbf{x}}^{\beta}\frac{{\rm
      e}^{-t|\mathbf{x}|}}{|\mathbf{x}|^{n-1}}\Big| 
  \le \beta!\Big(\frac{\sqrt{2}}{|\mathbf{x}|}\Big)^{n-1}
  &\Big(\frac{2n+2}{|\mathbf{x}|}\Big)^{|\beta|}{\rm
    e}^{-t|\mathbf{x}|/2}\ \\
  &\text{ for all } t>0\,, \mathbf{x}\in
  \mathbb{R}^n\setminus\{0\}\,, \beta\in\mathbb{N}_{0}^{n}\,,
\end{align}
which follows as for \cite[Lemma~C.3 (C.9)]{ours} (with the
modifications mentioned above).\quad\ {}
Then, with \(c_n=\Gamma(n+1/2)\pi^{-(n+1)/2}\), using \({\rm
    e}^{-t|\mathbf{z}|/2}\le1\),
\begin{align}\label{H massless}
  |H({\bf z})|&\le
  c_n {\1}_{\{|\,\cdot\,|\ge
    d\}}({\bf z})\int_0^\infty \Big(D_{\bf z}^{\beta}\frac{{\rm e}^{-t|{\bf
        z}|}}{|{\bf z}|^{n-1}}\Big)\,\frac{t\,dt}{(t^2+1)^{(n+1)/2}}
  \\&\le c_n  {\1}_{\{|\,\cdot\,|\ge
    d\}}({\bf z})\,\beta!\,\Big(\frac{\sqrt{2}}{|{\bf
      z}|}\Big)^{n-1}\Big(\frac{2n+2}{|{\bf z}|}\Big)^{|\beta|}
   \int_0^{\infty}\frac{t\,dt}{(t^2+1)^{(n+1)/2}}\,.\nonumber
\end{align}
The integral is finite since \(n\ge2\). Now proceed as after
\eqref{est-H2}. 

For \(n=1\), \(\beta\in\mathbb{N}\), again using
\eqref{eq:kernel-mass-zero}, 
\begin{align}\label{H massless n=1}
  |H(z)|\le  c_1 
  {\1}_{\{|\,\cdot\,|\ge d\}}(z)
  \int_0^\infty t^{\beta}{\rm e}^{-t|z|}\,\frac{t\,dt}{t^2+1}\,.
\end{align}
By Minkowski's inequality, for \(\beta\ge 2\), \(\mathfrak{r}\ge1\),
 \begin{align}\label{H-norm-massless n=1}\notag
   \|H\|_{\mathfrak{r}}&\le
   c_1\Big(\frac{2}{\mathfrak{r}}\Big)^{1/\mathfrak{r}}
   \int_0^\infty t^{\beta-1/\mathfrak{r}}{\rm
     e}^{-dt}\,\frac{t\,dt}{t^2+1}
   \\&\le \overline{c}_{1,n,\mathfrak{r}}\Big(\frac{1}{d}\Big)^{\beta}
   d^{1/\mathfrak{r}} \int_0^\infty t^{\beta-1/\mathfrak{r}}{\rm e}^{-t}\,\frac{dt}{t}\,.
 \end{align}
The last integral is finite, since \(\beta\ge2\),
\(\mathfrak{r}\ge1\), and can be
bounded by
 \begin{align}\label{last int bound}\notag
   \int_0^1 t^{\beta-1/\mathfrak{r}}{\rm e}^{-t}\,\frac{dt}{t}+
   \int_1^\infty t^{\beta-1/\mathfrak{r}}{\rm e}^{-t}\,\frac{dt}{t}
   &\le \Gamma(\beta-1)+\Gamma(\beta)\\&\le 2(\beta-1)!\le\beta!\,.
 \end{align}

The rest of the proof is (in all cases) as in \cite{ours}.
\end{proof}

\end{document}